\documentclass[12pt]{article}
\usepackage[latin1]{inputenc}
\usepackage{amsmath}
\usepackage{enumerate}
\usepackage{amsfonts}
\usepackage{amssymb}
\usepackage{hyperref}

\newcommand{\qed}{\hfill $\Box$ \vspace{\baselineskip}}

\numberwithin{equation}{section}

\newtheorem{teo}{Theorem}[section]

\newtheorem{lem}[teo]{Lemma}

\newtheorem{es}[teo]{Exemple}

\newcommand{\be}{\begin{equation}}
\newcommand{\ee}{\end{equation}}

\def\no{\noindent}

\def\ba{\begin{array}}
\def\ea{\end{array}}

\def\lb{\lambda}

\def\P{\mathsf{P}}
\def\E{\mathsf{E}}

\def\R{\mathbb{R}}

\def\vp{\varphi}
\def\ve{\varepsilon}

\begin{document}

\begin{center}
{\huge A very short proof of the subgaussian convex-order comparison theorem}
\end{center}

\medskip

\begin{center}
Antoine Song\footnote{California Institute of Technology, 177 Linde Hall, 1200 E. California Blvd., Pasadena, CA
91125.
Email address: aysong@caltech.edu}and
 Michel Talagrand\footnote {Paris, France, michel.talagrand@gmail.com}
\end{center}

\vspace{1cm}

\begin{abstract} We provide a very short proof of the following comparison theorem, which has recently played an important role: every centered subgaussian random vector is dominated in convex order, up to a universal constant, by a standard Gaussian random vector.

\end{abstract}
We denote by $\langle \cdot,\cdot\rangle$ (resp. $\Vert \cdot\Vert$) the standard scalar product (resp. the standard norm) on $\R^N$, and by $G$ a standard Gaussian vector in $\R^N$. We say that a random vector $X$ in $\R^N$ is {\it subgaussian} if for each $a\in \R^N$, we have
$$\E \exp (\langle a,X\rangle )\leq \E \exp (\langle a ,G\rangle ) = \exp (\Vert a\Vert^2 /2).$$
In particular, in this note subgaussian random vectors are always centered.

Ramon van Handel recently showed in \cite{v} that the following theorem could be deduced from a general comparison result of Jingbo Liu \cite[Corollary 2]{Liu25}:

\begin{teo} [Subgaussian convex-order comparison] \label{main}
There exists a universal constant $L$ such that for any subgaussian random vector $X$ in $\R^N$ and any convex function $\vp$ with $\E\vp(G)<\infty$, we have
$$\E\vp(X/L)\leq \E \vp(G).$$
\end{teo}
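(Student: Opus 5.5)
By Strassen's theorem, $\E\vp(X/L)\le\E\vp(G)$ for all convex $\vp$ is equivalent to the existence of a coupling of $X$ and $G$ with $\E[G\mid X]=X/L$. I plan to build this coupling explicitly as a mixture. Write $\mu$ for the law of $X$ and $\gamma$ for the law of $G$. I look for probability measures $Q_x$ on $\R^N$, one for each $x$, such that
$$\int y\,Q_x(dy)=x/L\quad\text{for every }x,\qquad \int Q_x\,\mu(dx)=\gamma .$$
The first condition is the martingale (barycenter) condition; the second says that drawing $X\sim\mu$ and then $G\sim Q_X$ produces a standard Gaussian $G$.

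The natural candidate is the exponentially tilted Gaussian. For $K>0$ put $P_x(dy)=\exp(\langle x,y\rangle/K-\Vert x\Vert^2/2K^2)\,\gamma(dy)$. This is the law of $G+x/K$, so it is a probability measure with barycenter $x/K$. Its mixture has density with respect to $\gamma$ given by
$$h(y)=\E\exp\big(\langle X,y\rangle/K-\Vert X\Vert^2/2K^2\big).$$
I then take $Q_x=\alpha P_x+(1-\alpha)R$, where $\alpha\in(0,1)$ and $R$ is a fixed centered probability measure. The barycenter of $Q_x$ is $\alpha x/K$, so $L=K/\alpha$ works provided
$$\alpha h\le 1 \ \ \gamma\text{-a.e.}\qquad\text{and}\qquad \int y\,(1-\alpha h(y))\,\gamma(dy)=0 .$$
When both hold, $R:=(1-\alpha h)\gamma/(1-\alpha)$ is a centered probability measure. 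The centering condition is automatic: $\int y\,h(y)\,\gamma(dy)=\E[X]/K=0$ because subgaussian vectors are centered. The total-mass condition is also automatic, since $\int h\,d\gamma=1$.

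The whole proof therefore reduces to one dimension-free bound: I need $\sup_y h(y)<\infty$ with a universal bound, after which I choose $\alpha=1/\sup h$. This is the main obstacle. The subgaussian hypothesis only gives $h(y)\le \exp(\Vert y\Vert^2/2K^2)$, which is unbounded in $y$. The factor $\exp(-\Vert X\Vert^2/2K^2)$ must be exploited to cancel the growth, and that is not automatic. For instance, if $X$ concentrates near a sphere of radius $\sqrt N$, the naive estimate loses a dimension-dependent factor.

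My first attempt to overcome this is to tilt a narrower Gaussian. I would use $G=sG'+tG''$ with $s^2+t^2=1$ and let $P_x$ be the law of $sG'+t(G''+x/K)$. The point is to make the ratio $dP_x/d\gamma$ involve $\exp(-c\Vert y\Vert^2)$, which can absorb the growth $\exp(\Vert y\Vert^2/2K^2)$ coming from the subgaussian moment bound. I would then tune $s$ and $K$ so that the resulting Gaussian integrals produce only universal constants and no factor $s^{-N}$. If a pointwise bound $h\le M$ still fails, the fallback is to bound $h$ only on a set of large $\gamma$-measure. On the complementary tail I would modify the $P_x$ by a further centered correction, again checking the barycenter condition. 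All remaining steps are the routine verifications of the barycenter and marginal identities indicated above.
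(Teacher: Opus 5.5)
\textbf{There is a genuine gap: the reduction rests on a bound that is false, and the missing step is the whole theorem.}

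Your reformulation is fine. Only the easy direction of Strassen (conditional Jensen) is needed, and your bookkeeping for $R$ is correct: mass $1$, and centering from $\E X=0$.

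\textbf{The key bound fails.} The bound you single out as the main obstacle is not a dimension-dependence issue: $\sup_y h(y)=\infty$ for \emph{every} $X$ that is not a.s.\ $0$, already when $N=1$. The likelihood ratio $dP_x/d\gamma(y)=\exp(\langle x,y\rangle/K-\Vert x\Vert^2/2K^2)$ of a shifted Gaussian is unbounded in $y$. Since $X$ is centered and nontrivial, there are a unit vector $u$ and $\delta,r>0$ with $p=\P(\langle X,u\rangle>\delta,\ \Vert X\Vert\le r)>0$. Hence $h(tu)\ge p\exp(t\delta/K-r^2/2K^2)\to\infty$ as $t\to\infty$. For instance, for $X=\pm v$ with probability $1/2$ each, $h(y)=\cosh(\langle v,y\rangle/K)e^{-\Vert v\Vert^2/2K^2}$. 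So no $\alpha>0$ makes $(1-\alpha h)\gamma$ a positive measure.

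Your proposed repair does not change this. The law of $sG'+t(G''+x/K)$ is $N(tx/K,I)$, which is the same family with $K$ replaced by $K/t$, so no factor $\exp(-c\Vert y\Vert^2)$ appears. A genuinely narrower kernel $N(x/K,s^2I)$ with fixed $s<1$ does have bounded ratio. However, near $y=0$ its mixture density is close to $s^{-N}$ when $X$ is close to $0$, so $\alpha$, and hence $1/L$, is exponentially small in $N$.

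\textbf{The fallback is the whole problem.} Controlling $h$ on a large set and repairing the tail by centered corrections is not a step but a restatement of the problem. Laws $P_x$ with barycenter $x/K$ and $\int P_x\,\mu(dx)\le\gamma/\alpha$ exist \emph{if and only if} the theorem holds with $L=K/\alpha$: any martingale coupling supplied by Strassen is such a family with $\alpha=1$. Your plan contains no mechanism giving dimension-free control.

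Some deep input is unavoidable. Taking $\vp$ to be a semi-norm, the statement contains Talagrand's comparison $\E p(X)\le L\,\E p(G)$ for subgaussian vectors, whose known proofs are nontrivial (majorizing measures, or Liu's lifting argument building on Fernique).

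\textbf{How the paper proceeds.} It takes exactly that theorem as its black box. Applied to the gauge of $C\cap(-C)$, with $\E p(G)$ bounded by a universal constant via Borell's lemma, it shows that every convex $C$ with $\P(G\in C)\ge 7/8$ satisfies $\P(X\in LC)\ge 1/2$. A tensor-power trick then passes from convex sets to convex functions. One applies the set statement in $\R^{NM}$ to $\bar X=(X_1,\dots,X_M)$ and the convex set $\{\bar\vp\le\E\vp(G)+\ve\}$, where $\bar\vp(\bar x)=M^{-1}\sum_{i\le M}\vp(x_i)$. The law of large numbers, used on both sides, then yields a point $\bar x\in LC$ with $\bar\vp(\bar x/L)\ge\E\vp(X/L)-\ve$. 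No coupling is ever constructed.
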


\textbf{Historical notes:} A first understanding of general subgaussian processes was obtained in \cite{t} by the second-named author, where it is proved that for some universal constant $L>0$, for every semi-norm $p(\cdot)$ and every  subgaussian random vector $X$ in $\R^N$, we have $\E p(X)\leq L\E p(G)$ (see also \cite[Theorem 2.10.11]{t14}).
This result was revisited with different arguments in \cite{vh1,vh2}.
Recently, Jingbo Liu discovered a strengthening of this comparison theorem \cite[Corollary 2]{Liu25} with a new and elementary approach, based on a ``lifting argument'' and older work of Fernique on stationary Gaussian processes \cite{f}.
Ramon van Handel observed in \cite{v} that \cite[Corollary 2]{Liu25} quickly leads to Theorem \ref{main}. In \cite[Subsection 2.4.2]{Son26}, the first-named author sketched a different short proof of Theorem \ref{main} by combining \cite{t} and a simple ``tensor power trick'', which gives
the geometric interpretation that  subgaussian vectors essentially  form the convex hull of Gaussian vectors up to universal scaling constants. This inspired the even shorter proof of this note\footnote{A.I. was not used for writing this paper apart from bibliographic search.}, worked out by the second-named author. Theorem \ref{main} played an important role in the resolution of the convexity conjecture of the second-named author \cite{t95,t10}  by Dongming (Merrick) Hua, Stefan Tudose and the first-named author \cite{HST26}. Sharp constants for Theorem \ref{main} in dimension 1 were computed in \cite{dp,zhang}.

\vspace{1em}

\no{\bf Proof of Theorem \ref{main}.}

\begin{lem}\label{lemma}
There exists a universal constant $L$ with the following property.  Consider a  subgaussian random vector $X$ in $\R^N$, standard Gaussian vector $G$ in $\R^N$ and a convex set $C$ in $\R^N$ with $\P(G\in C)\geq 7/8$. Then $\P(X\in LC)\geq 1/2$.
\end{lem}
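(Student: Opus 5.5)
The plan is to reduce the statement to the seminorm comparison $\E p(X)\leq L\E p(G)$ of \cite{t} (recalled in the historical notes), applied to the gauge of a symmetric convex subset of $C$, and then to conclude with Markov's inequality.

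\textbf{Step 1: symmetrize.} Put $K=C\cap(-C)$. Since $G$ is symmetric, $\P(G\in -C)=\P(G\in C)\geq 7/8$, so
$$\P(G\in K)\geq 1-2\cdot\tfrac18=\tfrac34 .$$
Thus $K$ is a symmetric convex set of positive Lebesgue measure. It therefore has nonempty interior, and by symmetry and convexity $0$ lies in its interior. Hence the gauge $p(x)=\inf\{t>0: x\in tK\}$ is a finite seminorm on $\R^N$. Moreover $p(x)<t$ implies $x\in tK\subset tC$, and $\P(p(G)\leq 1)\geq \P(G\in K)\geq 3/4$.

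\textbf{Step 2: $\E p(G)\leq L_0$ for a universal $L_0$.} This is the step I expect to need the most care, since it is the only genuinely Gaussian input. Write $p(x)=\sup_{a\in K^\circ}\langle a,x\rangle$, where $K^\circ$ is the polar of $K$. For $a\in K^\circ$ we have $|\langle a,x\rangle|\leq p(x)$, so
$$\P(|\langle a,G\rangle|\leq 1)\geq \P(p(G)\leq 1)\geq 3/4 .$$
Since $\langle a,G\rangle$ is $N(0,\Vert a\Vert^2)$, this forces $\Vert a\Vert\leq 1/\Phi^{-1}(7/8)=:\sigma_0$. Consequently $p$ is $\sigma_0$-Lipschitz. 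Gaussian concentration for Lipschitz functions (Borell--Tsirelson--Ibragimov--Sudakov) then gives $\E p(G)\leq M+L_1\sigma_0$, where $M\leq 1$ is a median of $p(G)$. So $\E p(G)\leq L_0$ with $L_0=1+L_1\sigma_0$. An alternative for this step is Fernique's integrability theorem in its quantitative form, but the Lipschitz argument seems cleanest.

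\textbf{Step 3: comparison and Markov.} By \cite{t}, $\E p(X)\leq L_2\E p(G)\leq L_2L_0$. Markov's inequality gives
$$\P\big(p(X)\geq 2L_2L_0\big)\leq 1/2 .$$
Hence with probability at least $1/2$ we have $p(X)<2L_2L_0$. On that event $X\in 2L_2L_0K\subset 2L_2L_0C$, which proves the lemma with $L=2L_2L_0$.

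The only point to double-check is the closedness issue: $C$ need not be closed. Using the strict inequality $p(x)<t\Rightarrow x\in tK$ avoids it. In Step 3 one can take $t$ slightly larger than $2L_2L_0$, or simply note that Markov gives $\P(p(X)<t)\geq 1/2$ for any $t>2L_2L_0$, and enlarge $L$ accordingly.
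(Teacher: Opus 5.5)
Your proposal is correct and is essentially the paper's proof: symmetrize to $C\cap(-C)$, pass to the gauge seminorm, bound $\E p(G)$ by a universal constant, apply the seminorm comparison $\E p(X)\leq L\E p(G)$, and conclude with Markov's inequality. The differences are only in details: you get $\E p(G)\leq L_0$ from the bound $\Vert a\Vert\leq 1/\Phi^{-1}(7/8)$ on $K^\circ$ plus Gaussian concentration for Lipschitz functions, where the paper invokes Borell's lemma, and you handle non-closedness of $C$ via the strict inequality $p(x)<t$, where the paper passes from $2L''$ to $3L''$.
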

\no {\bf Proof:}	Let $C'=C\cap(-C)$ so that $\P(G\in C')\geq 3/4.$ Define the semi-norm $p(x)=\inf\{ \lb ; x\in \lb C'\}$. By the standard Borel lemma \cite[Theorem 2.7]{ver}, $\E \Vert G\Vert\leq L'$ where $L'$ is a universal constant. By \cite{t} (see also \cite[Theorem 2.10.11]{t14}), $\E p(X)\leq L''$ for some positive universal constant $L''$ and so 
 $\P(p(X) \leq 2 L'')\geq 1/2$, which means that  $\P(X \in 3 L''C')\geq 1/2$. \qed

To prove Theorem \ref{main}, we can assume that $\vp$ is the maximum of finitely many affine functions on $\R^N$, because any convex function on $\R^N$ is a limit of an increasing sequence of functions of this form.
Consider a large integer $M$ and i.i.d copies $X_i$ of $X$ and $G_i$ of $G$. The basic observation is that the random vector $\bar X=(X_1,\ldots, X_M)$ in $\R^{NM}$ is subgaussian and the random vector $\bar G=(G_1, \ldots, G_M)$ is standard Gaussian. 
Denote by $\bar x=(x_1, \ldots, x_M)$ the generic point of $\R^{NM}$ where $x_i\in \R^N$.  The function $\bar \vp$ on $\R^{NM}$ defined by 
$$\bar \vp(\bar x)=\frac{1}{M}\sum_{i\leq M} \vp(x_i)$$
is convex. Consider an arbitrary $\ve>0$. The law of large numbers shows that for $M$ large enough, the convex set $C=\{\bar \vp\leq \E \vp(G)+\ve\}$ satifies $\P(\bar G\in C)\geq 7/8$. Then by Lemma \ref{lemma}, $\P(\bar X\in LC)\geq 1/2$ for some universal constant $L>1$. Besides, note that $\vp(X/L)$ is integrable since $\vp$ is the maximum of finitely many affine functions. Therefore, if $M$ is large enough, by the law of large numbers again, we can find some $\bar x \in LC$ such that $\bar \vp(\bar x/L)\geq \E\vp(X/L)-\ve$. Hence
 $$\E\vp(X/L)-\ve \leq \vp(\bar x/L)\leq \E\vp(G)+\ve $$
 which concludes the proof since $\ve$ is arbitrarily small. \qed

\end{document}